\newtheorem{theo}{Theorem}[section]
\newtheorem{lemma}[theo]{Lemma}
\newtheorem{col}[theo]{Corollary}
\theoremstyle{definition}
\newtheorem{defn}[theo]{Definition}
\newtheorem{remark}[theo]{Remark}
\newtheorem{ex}[theo]{Example}
\newtheorem{nt}[theo]{Notation}
\def\rank{\mathop{\rm rank}\nolimits}
\def\dim{\mathop{\rm dim}\nolimits}
\def\diag{\mathop{\rm diag}\nolimits}
\def\Ker{\mathop{\rm Ker}\nolimits}
\def\ad{\mathop{\rm ad}\nolimits}
\def\Ann{\mathop{\rm Ann}\nolimits}
\def\Span{\mathop{\rm Span}\nolimits}
\def\C{\mathop{\mathbb{C}}\nolimits}
\def\Z{\mathop{\mathbb{Z}}\nolimits}
\def\g{\mathop{\mathfrak{g}}\nolimits}
\def\h{\mathop{\mathfrak{h}}\nolimits}
\def\Hom{\mathop{\rm Hom}\nolimits}
\def\sl{\mathop{\mathfrak{sl}}\nolimits}
\def\gl{\mathop{\mathfrak{gl}}\nolimits}
\def\V{\mathop{\mathcal{V}}\nolimits}
\newcommand\bigzerou{\smash{\lower.3ex\hbox{\Huge 0}}} 
\newcommand\bigstaru{\smash{\lower.3ex\hbox{\Huge $*$}}} 
\numberwithin{equation}{section}
\title{Reduced contragredient Lie algebras and PC Lie algebras}
\author{ Nagatoshi Sasano}
\begin{document}
\thispagestyle{empty}

\newpage
\begin{center}{\huge  Reduced contragredient Lie algebras and PC Lie algebras \footnote{{\bf 2010 Mathematic Subjects Classification}: Primary 17B65, Secondary 17B67, 17B70\\Keywords and phrases: contragredient Lie algebras, standard pentads, PC Lie algebras}}\end{center}
\vspace{50truept}

\begin{center}{Nagatoshi SASANO}\end{center}
\begin{abstract}
The first aim of this paper is to show that any finite-dimensional reductive Lie algebra and its finite-dimensional completely reducible representation can be embedded into some PC Lie algebra.
The second aim is to find the structure of a PC Lie algebra.
\end{abstract}

\section* {Introduction} 
Using the theory of standard pentads, an arbitrary finite-dimensional reductive Lie algebra and its representation can be embedded into some graded Lie algebra.
The term ``standard pentad'' is defined as the following.
\begin{defn}[\text {standard pentad, \cite [Definition 2.2]{Sa3}}]
Let $\g $ be a Lie algebra, $\rho :\g\otimes V\rightarrow V$ a representation of $\g $ on $V$, $\V $ a submodule of $\Hom (V,\C)$ and $B$ a non-degenerate invariant bilinear form on $\g $ all defined over $\C $.
When a pentad $(\g ,\rho,V,\V,B)$ satisfies the following conditions, we call it a standard pentad:
\begin{itemize}
\item {the restriction of the canonical bilinear form $\langle \cdot ,\cdot \rangle :V\times \Hom (V,\C)\rightarrow \C$ to $V\times \V$ is non-degenerate},
\item {there exists a linear map $\Phi _{\rho }:V\otimes \V\rightarrow \g $, called a $\Phi $-map, satisfying an equation 
$$
B(a,\Phi _{\rho }(v\otimes \phi ))=\langle \rho (a\otimes v),\phi \rangle 
$$
for any $a\in \g$, $v\in V$, $\phi \in \V$.}
\end{itemize}
\end{defn}
From a standard pentad, we can construct a graded Lie algebra.
\begin{theo}[\text {\cite [Theorem 2.15]{Sa3}}]\label {th;stapLie}
For any standard pentad $(\g,\rho,V,\V,B)$, there exists a graded Lie algebra $$L(\g,\rho,V,\V,B)=\bigoplus _{n\in \Z}V_n,$$ called the Lie algebra associated with $(\g,\rho,V,\V,B)$, satisfying the conditions that
$$
V_0\simeq \g
$$
as Lie algebras, that
$$
V_{-1}\simeq \V,\quad V_1\simeq V
$$
as $\g $-modules via the isomorphism of Lie algebras $V_0\simeq \g$ and that the restriction of bracket product $[\cdot ,\cdot ]:V_1\times V_{-1}\rightarrow V_0$ is induced by the $\Phi $-map $\Phi _{\rho }:V\otimes \V \rightarrow \g$.
\end{theo}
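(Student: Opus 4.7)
The plan is to construct $L(\g,\rho,V,\V,B)$ by a local-to-global argument: first equip the direct sum $T:=\V \oplus \g \oplus V$ with the structure of a \emph{local} Lie algebra, that is, a graded vector space concentrated in degrees $-1,0,1$ endowed with a skew-symmetric bracket satisfying every Jacobi identity whose output lies in $T$, and then extend this local data to a full $\Z$-graded Lie algebra by the standard maximal-prolongation construction.

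I would define the brackets on $T$ piece by piece. On $\g \times \g$ use the bracket of $\g$; on $\g \times V$ use $\rho$; on $\g \times \V$ use the contragredient action induced by $\rho$ and the pairing $\langle \cdot,\cdot \rangle$; on $V \times \V$ use $\Phi_{\rho}$; and set $[V,V]=[\V,\V]=0$. Antisymmetry is then built in. Among the Jacobi identities on triples drawn from $T$, the ones not mixing $V$ and $\V$ reduce immediately to the module axioms and the Jacobi identity of $\g$. The single substantive case is the triple $(a,v,\phi )\in \g\times V\times \V$, whose Jacobiator lies in $\g$; pairing this Jacobiator against an arbitrary $b\in \g$ via $B$ and using the defining property $B(b,\Phi _{\rho }(v\otimes \phi ))=\langle \rho (b\otimes v),\phi \rangle $, the invariance of $B$, and the characterization of the contragredient action, the pairing reduces to an identity of the form $\langle (\rho (b)\rho (a)-\rho (a)\rho (b)-\rho ([b,a]))v,\phi \rangle =0$, which holds because $\rho $ is a representation. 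Non-degeneracy of $B$ on $\g$ then upgrades ``pairs to zero against every $b$'' to the vanishing of the Jacobiator itself.

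Given this local Lie algebra, I would invoke the well-known construction (going back to Kac) that produces, from a local Lie algebra, a maximal $\Z$-graded Lie algebra $L=\bigoplus _{n\in \Z }V_n$ realizing the given data in degrees $-1,0,1$. Concretely one forms the free graded Lie algebra generated by $V$ in degree $1$ and $\V$ in degree $-1$, adjoins $\g$ in degree $0$ acting on the generators as prescribed, imposes the local brackets as relations, and quotients by the largest graded ideal whose intersection with $T$ vanishes. By construction the restricted bracket $V_1\times V_{-1}\rightarrow V_0$ is induced by $\Phi _{\rho }$, and the $\g $-module identifications $V_1\simeq V$, $V_{-1}\simeq \V$ hold tautologically.

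The main obstacle is to verify that the natural map $T\rightarrow V_{-1}\oplus V_0\oplus V_1$ is an honest isomorphism, i.e.\ that the maximal ideal meets $T$ trivially. This is precisely where the non-degeneracy hypotheses of the standard-pentad definition enter: non-degeneracy of $\langle \cdot,\cdot \rangle $ on $V\times \V$ ensures that the candidate local brackets between $V$ and $\V$ cannot be further collapsed, and non-degeneracy of $B$ on $\g$ controls the degree-zero part so that no element of $\g$ is forced into the ideal. Together these ensure the identifications $V_0\simeq \g$, $V_1\simeq V$, $V_{-1}\simeq \V$ claimed in the theorem.
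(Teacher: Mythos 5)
Your proposal is essentially the construction used for this theorem: the paper itself imports the statement from \cite[Theorem 2.15]{Sa3} without proof, and the argument there proceeds exactly as you describe --- equip $\V\oplus\g\oplus V$ with a local Lie algebra structure, verify the one nontrivial Jacobi identity on $\g\times V\times\V$ by pairing the Jacobiator against $\g$ via $B$ and invoking the $\Phi$-map identity, invariance of $B$, and the representation axiom, then extend to a $\Z$-graded Lie algebra (in \cite{Sa3} the extension is done by an explicit prolongation $V_{n+1}\subset\Hom(V_{-1},V_n)$ rather than your free-algebra-modulo-maximal-ideal route, but both yield a graded Lie algebra with the required local part). The only imprecision is in your last paragraph: once the local Lie algebra axioms are verified, the fact that the local part survives intact into the minimal graded Lie algebra is automatic from Kac's general theory of local Lie algebras, so the non-degeneracy of $\langle\cdot,\cdot\rangle$ and of $B$ is consumed earlier (in defining the contragredient data and the $\Phi$-map and in the Jacobi verification), not at the quotienting step.
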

When a Lie algebra $\g $ is finite-dimensional, it is known that any pentad $(\g,\rho,V,\V,B)$ is standard.
Thus, we can ``embed'' a finite-dimensional reductive Lie algebra\footnote {It is known that any finite-dimensional reductive Lie algebra has a non-degenerate invariant bilinear form (see \cite [Chapter 1. \S 6.4 Proposition 5]{bu-1}).} and its representation into some graded Lie algebra in the sense of Theorem \ref {th;stapLie}.
Here, we have a problem how to find the structure of a Lie algebra of the form $L(\g, \rho,V,\V,B)$.
In this paper, we shall consider this problem under some assumptions.
\par 
Now, as special cases of standard pentads, we give the notion of pentads of Cartan type.
A pentad of Cartan type is a pentad which has a finite-dimensional commutative Lie algebra and its finite-dimensional diagonalizable representation.
We can describe an arbitrary pentad of Cartan type by two positive integers $r, n$ and three matrices $A, D,\Gamma $ as $P(r,n;A,D,\Gamma )$.
We denote the Lie algebra associated with $P(r,n;A,D,\Gamma )$ by $L(r,n;A,D,\Gamma )$, and moreover, we call a Lie algebra of the form $L(r,n;A,D,\Gamma )$ a PC Lie algebra.
For detail on pentads of Cartan type, see \cite {Sa4}.
\par 
We have two aims of this paper.
The first aim is to show that an arbitrary Lie algebra of the form $L(\g, \rho ,V,\Hom (V,\C),B)$ with a finite-dimensional reductive Lie algebra $\g $ and its finite-dimensional completely reducible representation $(\rho ,V)$ is isomorphic to some PC Lie algebra (Theorem \ref {th;PC}).
And, moreover, the second aim is to find the structure of PC Lie algebras (Theorem \ref {theo;1}).
In \cite [Theorem 3.9]{Sa4},  we have obtained a way how to describe the structure of $L(r,n;A,D,\Gamma )$ under the assumption that $\Gamma \cdot {}^t D\cdot A\cdot D$ is invertible.
In this paper, we shall find the structure of $L(r,n;A,D,\Gamma )$ without any assumptions on $r,n$ and $A,D,\Gamma $.

\begin{nt}
Throughout this paper, all objects are defined over the complex number field $\C$.
We use the following notations:
\begin{itemize}
\item {$\Span (v_1,\ldots ,v_n)$: a vector space spanned by $v_1,\ldots ,v_n$,}
\item {$\mathrm {M}(k,l;\C)$: a set of matrices of size $k\times l$ whose entries belong to $\C$,}
\item {$\diag (c_1,\ldots ,c_m)$: a diagonal matrix of size $m$ whose $(i,i)$-entry is $c_i$,}
\item {$\delta _{ij}$: the Kronecker delta.}
\end{itemize}
\end{nt}
\begin{nt}
We regard a representation $\rho $ of a Lie algebra $\mathfrak{l}$ on $U$ as a linear map 
$$
\rho :\mathfrak{l}\otimes U\rightarrow U
$$
satisfying 
$$
\rho ([a,b]\otimes u)=\rho (a\otimes \rho (b\otimes u))-\rho (b\otimes \rho (a\otimes u))
$$
for any $a,b\in \mathfrak{l}$ and $u\in U$.
Moreover, we denote an ideal $\{a\in \mathfrak{l}\mid \rho (a\otimes u)=0 \ \text {for any}\ u\in U\}$ of $\mathfrak{l}$ by $\Ann U$.
\end{nt}

\section {PC Lie algebras and contragredient Lie algebras}\label {sec;1}
The purpose of this section is to prepare some notion and notations we need to understand the statements of the main theorems, Theorems \ref {th;PC} and \ref {theo;1}.
For detail, refer \cite {ka-1} and \cite {Sa4}.
\begin{defn}[\text {pentads of Cartan type, \cite [Definition 2.4]{Sa4}}]
Let $r$, $n$ be positive integers.
Let $A\in \mathrm {M}(r,r;\C)$ be an invertible square matrix, $D=(d_{ij})\in \mathrm {M}(r,n;\C)$ a matrix and $\Gamma =\diag (\gamma _1,\ldots ,\gamma _n)\in \mathrm {M}(n,n;\C)$ an invertible diagonal matrix.
Let $\h^r=\Span (\epsilon _1,\ldots ,\epsilon _r)$, $\C_D^{\Gamma }=\Span (e_1,\ldots ,e_n)$, $\C_{-D}^{\Gamma }=\Span (f_1,\ldots ,f_n)$ be vector spaces with dimensional $r$, $n$ and $n$ respectively.
We regard $\h ^r$ as a commutative Lie algebra:
$$
\h^r\simeq \gl _1^r
$$
and define representations $\Box _D^r$ and $\Box _{-D}^r$ of $\h ^r$ on $\C_D^{\Gamma }$ and $\C_{-D}^{\Gamma }$ as:
$$
\Box _D^r(\epsilon _i\otimes e_j)=d_{ij}e_j,\quad \Box _{-D}^r(\epsilon _i\otimes f_j)=-d_{ij}f_j
$$
for any $i=1,\ldots ,r$ and $j=1,\ldots ,n$.
Moreover, we define non-degenerate bilinear maps $B_A:\h^r \times \h^r \rightarrow \C$ and $\langle \cdot ,\cdot \rangle _D^{\Gamma }:C_D^{\Gamma }\times \C_{-D}^{\Gamma }\rightarrow \C$ as:
$$
B_A(c_1\epsilon _1+\cdots +c_r\epsilon _r,c_1^{\prime }\epsilon _1+\cdots +c_r^{\prime }\epsilon _r)=\begin{pmatrix}c_1 &\cdots &c_r\end{pmatrix}\cdot {}^{\rm t}A^{-1}\cdot \begin{pmatrix}c_1^{\prime }\\ \vdots \\ c_r^{\prime }\end{pmatrix},\quad \langle e_i,f_j\rangle _D^{\Gamma }=\delta _{ij}\gamma _i
$$
for $i,j=1,\ldots ,n$.
Under these, we define a standard pentad $(\h^r,\Box _D^r,\C_D^{\Gamma },\C_{-D}^{\Gamma },B_A)$ and denote it by $P(r,n;A,D,\Gamma )$.
We call a standard pentad of the form $P(r,n;A,D,\Gamma )$ a pentad of Cartan type.
\end{defn}
\begin{defn}[\text {Cartan matrix of a pentad of Cartan type, \cite [Definition 2.15]{Sa4}}]
For a pentad of Cartan type $P(r,n;A,D,\Gamma )$, put 
$$
C(A,D,\Gamma )=\Gamma \cdot {}^{\rm t}D\cdot A\cdot D.
$$
We call $C(A,D,\Gamma )$ the Cartan matrix of $P(r,n;A,D,\Gamma )$.
\end{defn}
\begin{defn}[\text {PC Lie algebras, \cite [Definition 3.6]{Sa4}}]
For a pentad of Cartan type $P(r,n;A,D,\Gamma )$, we denote its corresponding graded Lie algebra (see Theorem \ref {th;stapLie}) by $L(r,n;A,D,\Gamma )$.
We call a Lie algebra of the form $L(r,n;A,D,\Gamma )$ a PC Lie algebra.
\end{defn}
\begin{remark}
The structure of a PC Lie algebra $L(r,n;A,D,\Gamma )$ is independent to the diagonal matrix $\Gamma $ and to the order of column vectors in $D$ (see \cite [Propositions 1.7 and 2.6]{Sa4}).
\end{remark}
Moreover, we need to recall some notion of graded Lie algebras due to Kac in \cite {ka-1}.
\begin{defn}[transirivity, \text{\cite[p.1275, Definition 2]{ka-1}}]\label{defn;transitive}
A graded Lie algebra 
$$
G=\bigoplus _{i=-\infty }^{+\infty }G_i
$$
is said to be transitive if:
\begin{itemize}
\item {for $x\in G_i$, $i\geq 0$, $[x,G_{-1}]=\{0\}$ implies $x=0$},
\item {for $x\in G_i$, $i\leq 0$, $[x,G_{1}]=\{0\}$ implies $x=0$}.
\end{itemize}
\end{defn}
\begin{defn}[\text {contragredient Lie algebras, \cite [p.1279]{ka-1}}]\label{def;contra}
Let $A=(A_{ij})$ $i,j=1,\ldots ,n$ be a matrix with elements from $\C$.
Let $G_{-1}$, $G_1$, $G_0$ be vector spaces with bases $\{F_i\}$, $\{E_i\}$, $\{H_i\}$ respectively ($i=1,\ldots ,n$).
We define a structure of local Lie algebra on $\hat {G}(A):=G_{-1}\oplus G_0\oplus G_1$ by
\begin{align}
[E_i,F_j]=\delta _{ij}H_i,\quad [H_i,H_j]=0,\quad [H_i,E_j]=A_{ij}E_j,\quad [H_i,F_j]=-A_{ij}F_j.\label{eq;contra}
\end{align}
Then, we call the minimal graded Lie algebra $G(A)=\bigoplus _{i\in \Z}G_i$ with local part $\hat {G}(A)$ a contragredient Lie algebra, and the matrix $A$ its Cartan matrix.
\end{defn}
\begin{defn}[\text {reduced contragredient Lie algebras, \cite [p.1280]{ka-1}}]\label{def;redcontra}
Let $G(A)$ be a contragredient Lie algebra with Cartan matrix $A$ and $Z$ the center of $G(A)$.
We call a factor Lie algebra $G(A)/Z$ a reduced contragredient Lie algebra with Cartan matrix $A$.
\end{defn}

\section {Representations of finite-dimensional reductive Lie algebras and PC Lie algebras}
In this section, we shall give the first main theorem of this paper.
The following theorem tells us the importance of PC Lie algebras.
\begin{theo}\label {th;PC}
Let $\g $ be a finite-dimensional reductive Lie algebra, $(\rho ,V)$ a finite-dimensional completely reducible representation of $\g $, $B$ a non-degenerate symmetric invariant bilinear form on $\g $ all defined over $\C$.
Then a pentad $(\g, \rho,V,\Hom (V,\C), B)$ is standard, and moreover, the corresponding Lie algebra $L(\g,\rho,V,\Hom (V,\C),B)$ is isomorphic to some PC Lie algebra up to grading.
That is, an arbitrary finite-dimensional reductive Lie algebra and its arbitrary finite-dimensional completely reducible representation can be embedded into some PC Lie algebra.
\end{theo}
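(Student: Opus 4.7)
The plan is to dispatch the standardness assertion and then construct an explicit pentad of Cartan type whose associated PC Lie algebra is isomorphic, as an ungraded Lie algebra, to $L(\g,\rho,V,\Hom(V,\C),B)$. Standardness is immediate: because $\g$ is finite-dimensional and $\V=\Hom(V,\C)$, the canonical pairing $V\times\V\to\C$ is non-degenerate, and the required $\Phi$-map exists by the finite-dimensional construction already referenced in the introduction.

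For the isomorphism, the strategy is to refine the input by passing to a Cartan subalgebra. Fix a Cartan subalgebra $\h\subset\g$ with root space decomposition $\g=\h\oplus\bigoplus_{\alpha\in\Delta}\g_\alpha$, a choice of positive roots $\Delta^+$, and root vectors $e_\alpha,f_\alpha$ for $\alpha\in\Delta^+$. Since $(\rho,V)$ is completely reducible and $\h$ is abelian, $V=\bigoplus_\mu V_\mu$ decomposes into $\h$-weight spaces; choose a weight basis $v_1,\ldots,v_N$ of $V$ with weights $\mu_1,\ldots,\mu_N$. Set $r=\dim\h$, $n=|\Delta^+|+N$, and fix a basis $\epsilon_1,\ldots,\epsilon_r$ of $\h$. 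Let $D\in\mathrm{M}(r,n;\C)$ be the matrix whose columns list, in the basis $\{\epsilon_i\}$, the positive roots $\alpha\in\Delta^+$ followed by the weights $\mu_j$. Choose $A\in\mathrm{M}(r,r;\C)$ so that $B_A$ agrees with $B|_\h$ under the identification $\h^r\simeq\h$, and take $\Gamma$ to be any invertible diagonal matrix; the structure of $L(r,n;A,D,\Gamma)$ is $\Gamma$-independent by the remark in Section \ref{sec;1}.

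Next, construct a Lie algebra map $\Psi\colon L(r,n;A,D,\Gamma)\to L(\g,\rho,V,\Hom(V,\C),B)$ starting from the local part. Send the basis of $\h^r$ to $\{\epsilon_i\}\subset\h\subset V_0$; send each $e_k\in\C_D^\Gamma$ coming from a root $\alpha$ to $e_\alpha\in\g\subset V_0$; send each $e_k$ coming from a weight $\mu_j$ to $v_j\in V=V_1$; and define $\Psi$ on $\C_{-D}^\Gamma$ dually, using the $f_\alpha$ and a dual weight basis in $\Hom(V,\C)=V_{-1}$. The relations $[H,E_k]=(D_{\bullet k})E_k$ and $[H,F_k]=-(D_{\bullet k})F_k$ are satisfied by the very choice of $D$; the brackets $[E_k,F_k]$ land in $\h$ and are computed on both sides by the respective $\Phi$-maps, so matching them reduces to a direct calculation using $B_A$ and the definition of $A$. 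Extend $\Psi$ to all degrees by the universal property of the minimal graded Lie algebra with a given local part.

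The hard part is showing $\Psi$ is bijective. Surjectivity is clear because the image contains $\h$ together with all positive and negative root vectors of $\g$ and a basis of $V$ and $V^*$, which generate the target. Injectivity is more delicate because the two gradings do not coincide: a root vector $e_\alpha$ of $\g$ lies in $V_0$ on the original side but in degree $\sum c_i$ on the PC side when $\alpha=\sum c_i\alpha_i$, so one cannot directly appeal to graded arguments for $\Psi$. I expect to handle this by observing that $\Psi$ intertwines a finer $\Z^r$-grading on both sides, coming from the simultaneous weight-space decomposition under $\h$, and then invoking transitivity in the sense of Definition \ref{defn;transitive} to conclude that the kernel has trivial intersection with the local part and hence is zero. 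This step is the main obstacle, and it is precisely where the phrase ``up to grading'' in the statement becomes unavoidable.
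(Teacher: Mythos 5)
There is a genuine gap, and it is located exactly where you choose the columns of $D$. You take one column for \emph{every} positive root of $\g$ and one for \emph{every} weight of $V$. In the PC Lie algebra $L(r,n;A,D,\Gamma)$ all of the corresponding generators sit in degree $\pm 1$ of the canonical grading, while their intended images in the target are related by brackets. Concretely, for $\g=\sl_3$ the generators $e_{k_1},e_{k_2},e_{k_3}$ attached to $\alpha_1,\alpha_2,\alpha_1+\alpha_2$ would have to satisfy $\Psi([e_{k_1},e_{k_2}])=[X_{\alpha_1},X_{\alpha_2}]=cX_{\alpha_1+\alpha_2}=c\,\Psi(e_{k_3})$ with $c\neq 0$, yet $[e_{k_1},e_{k_2}]$ lies in degree $2$ of the source and $e_{k_3}$ in degree $1$ (and $[e_{k_1},e_{k_2}]\neq 0$ there, since $[[e_{k_1},e_{k_2}],f_{k_1}]=C_{12}e_{k_2}\neq 0$), so they are linearly independent in $L(r,n;A,D,\Gamma)$ and any such $\Psi$ has nontrivial kernel. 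The same redundancy occurs among the weight columns: $v_j$ and $\rho(X_\alpha\otimes v_k)$ can be proportional in $V$ while the corresponding elements of the PC Lie algebra live in degrees $1$ and $2$. The paper avoids this by feeding into the pentad only the simple root vectors $X_\alpha$, $\alpha\in\psi$, and only the subspace $\underline{V}=\{y\in V\mid [X_{-\alpha},y]=0\ \text{for all}\ \alpha\in\psi\}$ (dually $\overline{\V}$), so the degree-one part generates without redundancy.

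Two further points. First, your extension step runs the universal property backwards: a minimal graded Lie algebra \emph{receives} maps from graded Lie algebras with the same local part; to produce a map \emph{out of} it you must show that the maximal graded ideal meeting the local part trivially is sent to zero, and since your $\Psi$ is not graded with respect to the target's grading this is not automatic. This regrading is the real content of the theorem, and the paper does not reprove it --- it is imported wholesale from [Sa4, Theorem 3.27] together with the chain rule [Sa3, Theorem 3.26]; your sketch via a finer $\Z^r$-grading and transitivity is plausible but is precisely the part left undone. Second, the one genuinely new argument in the paper's proof is the verification that the resulting pentad is of Cartan type, i.e. that the centre $Z$ of $\g$ acts diagonalizably on $V$: this is exactly where complete reducibility enters (the paper rules out a Jordan block by producing a nonzero vector in $U\cap W$ for a complement $W$ of the eigenspace $U$; Schur's lemma on irreducible summands would also do). You assert the weight-space decomposition of $V$ under a Cartan subalgebra of the reductive $\g$ in one line; the assertion is true, but it is the point that needs the hypothesis, so it deserves a proof.
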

\begin{proof}
Since $\g $ is finite-dimensional, we have that the pentad $(\g,\rho,V,\Hom (V,\C),B)$ is standard (see \cite [Lemma 2.3]{Sa3}).
Denote the center part of $\g $ by $Z$ and the semisimple part of $\g $ by $\mathfrak{s}$.
Take a Cartan subalgebra $\h $ of $\mathfrak{s}$ and a fundamental system $\psi $ of the root system $R$ with respect to $(\mathfrak {s},\h )$.
If we take a non-zero root vector $X_{\gamma }$ of a root $\gamma \in R$ and define $\g $-submodules $\underline {V}\subset V$ and $\overline {\V}\subset \Hom (V,\C)$ by
$$
\underline {V}=\{y\in V\mid [X_{-\alpha },y]=0\quad \text {for any $\alpha \in \psi $}\},\quad \overline {\V}=\{\eta \in \Hom (V,\C )\mid [X_{\alpha },\eta ]=0\quad \text {for any $\alpha \in \psi $}\},
$$
then we have an isomorphism of Lie algebras up to grading:
\begin{align*}
&L(\g,\rho, V,\Hom (V,\C ),B)\\
&\quad \simeq L\left (Z\oplus \h,\rho \mid_{Z\oplus \h},\left (\sum _{\alpha \in \psi }\C X_{\alpha } \oplus \underline {V}\right ), \left (\sum _{\alpha \in \psi }\C X_{-\alpha } \oplus \overline {\V}\right ), B\mid _{(Z\oplus \h)\times (Z\oplus \h)}\right )
\end{align*}
using \cite [Theorem 3.27 and (3.16) in its proof]{Sa4}, chain rule (\cite [Theorem 3.26]{Sa3}) and the assumption that the bilinear form $B$ is symmetric.
Thus, to prove our claim, it suffices to show that the representation $\rho \mid _{Z\oplus \h}$ of $Z\oplus \h$ on $\sum _{\alpha \in \psi }\C X_{\alpha } \oplus \underline {V}$ is simultaneously diagonalizable (see \cite [Proposition 2.5]{Sa4}).
Here, note that the representation restricted to $\h $ on $\sum _{\alpha \in \psi }\C X_{\alpha } \oplus \underline {V}$ is simultaneously diagonalizable from the well-known properties of Cartan subalgebras.
Thus, if we assume that the representation $(Z\oplus \h ,\sum _{\alpha \in \psi }\C X_{\alpha } \oplus \underline {V})$ is not simultaneously diagonalizable, then there exist an element $\epsilon \in Z$, an element $a\in \C$ and an element $Y\in \underline {V}$ such that 
\begin{align}
\rho (\epsilon \otimes Y)-aY\neq 0,\quad \rho (\epsilon \otimes (\rho (\epsilon \otimes Y)-aY))-a(\rho (\epsilon \otimes Y)-aY)=0. \label{eq5}
\end{align}
Using these, define a non-zero proper vector subspace $U$ of $V$ by
$$
U=\{y\in V\mid \rho (\epsilon \otimes y)-ay=0\}.
$$
Since $\epsilon $ belongs to the center part of $\g$, $U$ is a $\g$-submodule of $V$.
Then from the assumption that the representation $\rho $ of $\g$ on $V$ is completely reducible, we have a non-zero proper $\g$-submodule $W$ of $V$ such that 
$$
V=U\oplus W.
$$
If we take elements $u\in U$ and $w\in W$ such that 
$$
Y=u+w,
$$
then we have that 
$$
0\neq \rho (\epsilon \otimes Y)-aY=(\rho (\epsilon \otimes u)-au)+(\rho (\epsilon \otimes w)-aw)=\rho (\epsilon \otimes w)-aw\in U\cap W
$$
from (\ref {eq5}).
It is a contradiction.
\end{proof}
\begin{ex}\label {ex;1}
For $m=0,1,2,\ldots $, we denote by $m\Lambda _1$ the irreducible representation of $\sl _2$ on $(m+1)$-dimensional vector space $V(m+1)$.
For example, the adjoint representation of $\sl _2$ on itself is $(\ad ,\sl _2)=(2\Lambda _1,V(3))$.
Denote the Killing form of $\sl _2$ by $K_{\sl _2}$.
Then we  have an isomorphism of Lie algebras
$$
L(\sl _2,m\Lambda _1,V(m+1),\Hom (V(m+1),\C),K_{\sl _2})\simeq L\left (1,2;\begin{pmatrix}1/8\end{pmatrix}, \begin{pmatrix}2&-m\end{pmatrix}, \begin{pmatrix}4&0\\0&4\end{pmatrix}\right )
$$
up to grading (see \cite [(3.20) in Theorem 3.28]{Sa4}).
That is, the representation $(m\Lambda _1,V(m+1))$ of $\sl _2$ can be embedded into the PC Lie algebra associated with
\begin{align}
P\left (1,2;\begin{pmatrix}1/8\end{pmatrix}, \begin{pmatrix}2&-m\end{pmatrix}, \begin{pmatrix}4&0\\0&4\end{pmatrix}\right ).\label {eq;sl}
\end{align}
\end{ex}
From Theorem \ref {th;PC}, it is natural for us to ask the structure of PC Lie algebras.

\section {Structure of PC Lie algebras}
Using the notion and notations recalled in section \ref {sec;1}, we can describe the structure of a given PC Lie algebra.
For this, we shall start with the following lemma.
\begin{lemma}\label {lem1}
We identify an $m$-tuple $(x_1,\ldots ,x_m)\in \mathfrak{gl}_1^m$ with a row vector $\begin{pmatrix}x_1&\cdots &x_m\end{pmatrix}\in \mathrm {M}(1,m;\C)$.
For an arbitrary pentad of Cartan type $P(r,n;A,D,\Gamma )$ and its corresponding Lie algebra $L(r,n;A,D,\Gamma )$, we have the following claims.
\begin{itemize}
\item [{\rm (i)}]
{We have equations
\begin{align*}
[V_{-1}, V_{1}]&=\Span (\text {the row vectors of $(\Gamma \cdot {}^t D\cdot A)$})\\
&=\left \{ \begin{pmatrix} c_1 &\cdots &c_n\end{pmatrix}\cdot \Gamma \cdot {}^{\rm t} D\cdot A\mid  c_1,\ldots ,c_n\in \C\right \}
\end{align*}
and
$$
\dim [V_{-1},V_{1}]=\rank D.
$$
}
\item [{\rm (ii)}]
{We have equations
\begin{align*}
\Ann \C_D^{\Gamma } &=\left \{c_1\epsilon _1+\cdots +c_r\epsilon _r\mid \begin{pmatrix} c_1 &\cdots &c_r\end{pmatrix}\cdot D=\begin{pmatrix} 0&\cdots &0\end{pmatrix},\quad  c_1,\ldots ,c_r\in \C \right \}\\
&=\left \{ \begin{pmatrix} c_1 &\cdots &c_r\end{pmatrix}\mid \begin{pmatrix} c_1 &\cdots &c_r\end{pmatrix}\cdot D=\begin{pmatrix} 0&\cdots &0\end{pmatrix},\quad  c_1,\ldots ,c_r\in \C \right \}
\end{align*}
and
$$
\dim \Ann \C_D^{\Gamma } =r-\rank D.
$$
}
\item [{\rm (iii)}]
{We have equations
\begin{align*}
&[V_{-1},V_1]\cap \Ann \C_D^{\Gamma } \\
&\quad =\left \{ \begin{pmatrix} c_1 &\cdots &c_n\end{pmatrix}\cdot \Gamma \cdot {}^{\rm t} D\cdot A\mid \begin{pmatrix} c_1 &\cdots &c_n\end{pmatrix}\cdot C=\begin{pmatrix} 0&\cdots &0\end{pmatrix},\quad c_1,\ldots ,c_n\in \C \right \}
\end{align*}
and
$$
\dim ([V_{-1},V_1]\cap \Ann \C_D^{\Gamma })=\rank D-\rank C,
$$
where $C=C(A,D,\Gamma )$ is the Cartan matrix of $P(r,n;A,D,\Gamma )$.
}
\end{itemize}
\end{lemma}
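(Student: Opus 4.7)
The plan is to treat the three parts in sequence, since (iii) builds directly on (i) and (ii), and all three reduce to linear algebra once the $\Phi$-map for a pentad of Cartan type is computed explicitly.

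For part (i), I would start by using Theorem \ref{th;stapLie}, which says that $[V_{-1},V_1]$ is the image of the $\Phi$-map $\Phi_\rho:\C_D^\Gamma\otimes\C_{-D}^\Gamma\to\h^r$. So it suffices to determine $\Phi_\rho(e_j\otimes f_i)$ for all $i,j$. Testing against $\epsilon_k$ in the defining equation
\[
B_A(\epsilon_k,\Phi_\rho(e_j\otimes f_i)) = \langle\Box_D^r(\epsilon_k\otimes e_j),f_i\rangle_D^\Gamma = d_{kj}\,\delta_{ij}\,\gamma_j,
\]
one sees that $\Phi_\rho(e_j\otimes f_i)=0$ for $i\ne j$, while for $i=j$ writing $\Phi_\rho(e_i\otimes f_i)=\sum_\ell\alpha_\ell\epsilon_\ell$ and using the formula $B_A(\epsilon_k,\epsilon_\ell)=({}^{\rm t}A^{-1})_{k\ell}$ yields that the row vector $(\alpha_1,\ldots,\alpha_r)$ equals the $i$-th row of $\Gamma\cdot{}^{\rm t}D\cdot A$. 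Hence $[V_{-1},V_1]$ is the row span of that matrix; invertibility of $\Gamma$ and $A$ then reduces its rank to the rank of ${}^{\rm t}D$, i.e.\ $\rank D$.

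For part (ii), the computation is direct from the definition of $\Box_D^r$. For $h=\sum_k c_k\epsilon_k$ we have $\Box_D^r(h\otimes e_j)=(\sum_k c_k d_{kj})e_j$, which vanishes simultaneously for all $j$ exactly when the row vector $(c_1,\ldots,c_r)$ lies in the left kernel of $D$. Rank-nullity then gives $\dim\Ann\C_D^\Gamma = r-\rank D$.

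For part (iii), the identification in (i) lets me write every element of $[V_{-1},V_1]$ as $v=(c_1,\ldots,c_n)\cdot\Gamma\cdot{}^{\rm t}D\cdot A$, and (ii) characterizes membership in $\Ann\C_D^\Gamma$ by $v\cdot D=0$. But $v\cdot D=(c_1,\ldots,c_n)\cdot\Gamma\cdot{}^{\rm t}D\cdot A\cdot D=(c_1,\ldots,c_n)\cdot C$, giving the claimed description. For the dimension count I plan to introduce the surjective linear map
\[
\pi:\C^n\longrightarrow[V_{-1},V_1],\qquad (c_1,\ldots,c_n)\longmapsto(c_1,\ldots,c_n)\cdot\Gamma\cdot{}^{\rm t}D\cdot A.
\]
Since $\Gamma$ and $A$ are invertible, $\Ker\pi$ is the left kernel of ${}^{\rm t}D$, of dimension $n-\rank D$; meanwhile $\pi^{-1}\bigl([V_{-1},V_1]\cap\Ann\C_D^\Gamma\bigr)=\{c\in\C^n:c\cdot C=0\}$ has dimension $n-\rank C$, and this subspace contains $\Ker\pi$ because $C=\Gamma\cdot{}^{\rm t}D\cdot A\cdot D$. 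Subtracting gives $\dim([V_{-1},V_1]\cap\Ann\C_D^\Gamma)=(n-\rank C)-(n-\rank D)=\rank D-\rank C$.

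The only nontrivial step is the explicit computation of $\Phi_\rho$ in (i); everything after that is bookkeeping with rank-nullity and the invertibility of $\Gamma$ and $A$. I expect no real obstacle beyond being careful with transposes and with the distinction between row and column operations when reading off the bilinear form $B_A$ from its matrix ${}^{\rm t}A^{-1}$.
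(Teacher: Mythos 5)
Your proposal is correct and follows essentially the same route as the paper: it identifies $[V_{-1},V_1]$ with the row span of $\Gamma\cdot{}^{\rm t}D\cdot A$ and $\Ann\C_D^\Gamma$ with the left kernel of $D$, then gets (iii) by rank--nullity. The only difference is that you compute the $\Phi$-map and the annihilator explicitly, whereas the paper cites these facts from its reference [Sa4] (Propositions 2.11 and 2.25) and declares (iii) immediate; your version is simply more self-contained.
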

\begin{proof}
\begin{itemize}
\item [{\rm (i)}]
{
The vector space $[V_{-1},V_1]$ is spanned by $h_i\in \mathfrak{h}^r$ ($i=1,\ldots ,r$), which are identified with the $i$-th row vectors of the matrix $\Gamma \cdot {}^t D\cdot A$ ($i=1,\ldots ,r$) (see \cite [Proposition 2.11, Definition 2.12]{Sa4}).
Thus, we have that 
\begin{align*}
[V_{-1},V_1]=\Span (h_1,\ldots ,h_n)&=\Span (\text {the row vectors of $\Gamma \cdot {}^t D\cdot A$})\\
&=\left \{ \begin{pmatrix} c_1 &\cdots &c_n\end{pmatrix}\cdot \Gamma \cdot {}^t D\cdot A\mid  c_1,\ldots ,c_n\in \C\right \}.
\end{align*}
Moreover, since both $\Gamma \in \mathrm {M}(n,n;\C)$ and $A\in \mathrm {M}(r,r;\C)$ are invertible, we have an equation
$$
\dim [V_{-1},V_1]=\rank (\Gamma \cdot {}^t D \cdot A)=\rank D.
$$
}
\item [{\rm (ii)}]
{
This claim has been proved in \cite [Proposition 2.25]{Sa4}.
}
\item [{\rm (iii)}]
{
This claim follows from {\rm (i)} and {\rm (ii)} immediately.
}
\end{itemize}
This completes the proof.
\end{proof}
Using Lemma \ref {lem1}, we can describe the structure of an arbitrary PC Lie algebra using reduced contragredient Lie algebras.
This is the second main theorem.
\begin{theo}\label {theo;1}
Let $P(r,n;A,D,\Gamma )$ be a pentad of Cartan type and $C=C(A,D,\Gamma )=(C_{ij})_{i,j=1,\ldots , n}$ its Cartan matrix.
Let $G^{\prime }(C)$ be the reduced contragredient Lie algebra with Cartan matrix $C$. 
Then there exist vector spaces $U^{\prime }_0$, $\mathfrak{z}$, $\Delta \subset L(r,n;A,D,\Gamma )$ and a $\Z$-grading of $L(r,n;A,D,\Gamma )$
$$
L(r,n;A,D,\Gamma )=\bigoplus _{m\in \Z}U_m
$$
such that 
$$
U_0=U_0^{\prime }\oplus \mathfrak{z}\oplus \Delta ,\quad \dim \mathfrak{z}=\rank D-\rank C,\quad \dim \Delta =r-\rank D
$$
and 
\begin{align}
&U_0^{\prime }\oplus \bigoplus _{m\neq 0}U_m\simeq G^{\prime }(C), && [\mathfrak{z},L(r,n;A,D,\Gamma )]=\{0\}, &\notag \\
&[U_m,U_{-m}]\subset U_0^{\prime }\oplus \mathfrak{z},&&\text {the action of $\Delta $ on $U_m$ is diagonalizable}& \label {eqs;1}
\end{align}
for all $m\in \Z$.
\end{theo}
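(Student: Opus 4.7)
The strategy is to recognise a canonical central quotient of the subalgebra $\tilde L:=\langle V_{-1}\cup V_1\rangle\subseteq L:=L(r,n;A,D,\Gamma)$ as $G'(C)$. Since $\tilde L=\bigoplus_{m\neq 0}V_m\oplus [V_{-1},V_1]$, I would set $\mathfrak{z}:=[V_{-1},V_1]\cap\Ann\C_D^{\Gamma}$, pick a linear complement $U_0'$ of $\mathfrak{z}$ in $[V_{-1},V_1]$ and a linear complement $\Delta$ of $[V_{-1},V_1]$ in $V_0$, and take $U_m:=V_m$ for $m\neq 0$. The dimension equalities follow directly from Lemma~\ref{lem1}. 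Three of the four conditions in (\ref{eqs;1}) are routine: $\mathfrak{z}$ annihilates $V_{\pm 1}$ by definition and commutes with $V_0$ (which is commutative), so the Jacobi identity yields $[\mathfrak{z},L]=\{0\}$; the commutative $V_0$ acts on each $V_m$ by simultaneously diagonalisable operators (the weights are integer combinations of the columns of $D$), and so does $\Delta\subseteq V_0$; and a short induction via Jacobi gives $[V_m,V_{-m}]\subseteq [V_{-1},V_1]=U_0'\oplus\mathfrak{z}$.

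For the core isomorphism, observe that $U_0'\oplus\bigoplus_{m\neq 0}U_m\hookrightarrow L\twoheadrightarrow L/\mathfrak{z}$ is a vector-space isomorphism onto $\tilde L/\mathfrak{z}$, so it suffices to prove $\tilde L/\mathfrak{z}\simeq G'(C)$. From the bracket formulas of a pentad of Cartan type together with Lemma~\ref{lem1}(i) one reads off $[h_i,h_j]=0$, $[h_i,e_j]=C_{ij}e_j$, $[h_i,f_j]=-C_{ij}f_j$ and $[e_i,f_j]=\delta_{ij}h_i$ in $\tilde L$, so the assignment $E_i\mapsto e_i$, $F_i\mapsto f_i$, $H_i\mapsto h_i$ is a surjective homomorphism of local Lie algebras $\hat G(C)\twoheadrightarrow V_{-1}\oplus [V_{-1},V_1]\oplus V_1$ whose degree-$0$ kernel is $K_0:=\{\sum_j x_j H_j : D\Gamma(x_1,\ldots,x_n)^{\rm t}=0\}$, of dimension $n-\rank D$. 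A transpose computation shows $\ker(D\Gamma)\subseteq \ker{}^{\rm t}C$, hence by Kac's description $Z(G(C))\cap G_0(C)=\{\sum_j x_jH_j:(x_j)\in\ker{}^{\rm t}C\}$ one has $K_0\subseteq Z(G(C))$; in particular $K_0$ is central.

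To promote the local map to a surjective graded Lie algebra homomorphism $\Phi\colon G(C)\twoheadrightarrow \tilde L$, one shows that $\tilde L$ is the minimal graded Lie algebra with its local part $V_{-1}\oplus [V_{-1},V_1]\oplus V_1$: granted this, the image of the maximal ideal defining $G(C)$ in the free graded extension is a graded ideal of $\tilde L$ trivially meeting the local part, and so vanishes. For this minimality, given a non-zero graded ideal $I\subseteq \tilde L$ with $I\cap (V_{-1}\oplus [V_{-1},V_1]\oplus V_1)=\{0\}$, the $V_0$-invariant hull $I':=\sum_{k\geq 0}(\ad V_0)^k I$ is a graded ideal of $L$ (Jacobi combined with the $V_0$-invariance of $\tilde L$ forces $[V_{\pm 1},I']\subseteq I'$ and similarly for higher $V_m$) whose components in $V_{-1},V_0,V_1$ all vanish, contradicting the minimality of $L$ intrinsic to Sasano's pentad construction. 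A symmetric transitivity argument inside $G(C)$ pins down $\ker\Phi$: for the smallest $|m|\geq 2$ (if any) with $\ker\Phi\cap G_m(C)\neq\{0\}$, any such $y$ satisfies $[y,G_{-1}(C)]\subseteq \ker\Phi\cap G_{m-1}(C)=\{0\}$, so the ideal of $G(C)$ generated by $y$ lies in $\bigoplus_{|k|\geq |m|}G_k(C)$, trivially meets $\hat G(C)$, and vanishes by Kac's minimality of $G(C)$. Combined with the bijectivity of $\Phi$ in degrees $\pm 1$, this gives $\ker\Phi=K_0$, and so $\tilde L\simeq G(C)/K_0$. Since $\Phi^{-1}(\mathfrak{z})$ in $G_0(C)$ equals the full centre $\{\sum_j x_jH_j:(x_j)\in\ker{}^{\rm t}C\}=Z(G(C))$ and is trivial in non-zero degrees, a further quotient yields $\tilde L/\mathfrak{z}\simeq G(C)/Z(G(C))=G'(C)$, finishing the proof.

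The hard parts are the two minimality statements: the minimality of $\tilde L$, reduced to that of $L$ by the $V_0$-invariant hull trick, and the transitivity-style argument for $G(C)$ in degrees $|m|\geq 2$, which rests on Kac's intrinsic minimality of contragredient Lie algebras. These are precisely what allow the structural analysis to extend to the case where the Cartan matrix $C$ is singular, which is not covered by \cite[Theorem 3.9]{Sa4}.
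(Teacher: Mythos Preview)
Your argument is correct and matches the paper's approach: the same decomposition $V_0=U_0'\oplus\mathfrak{z}\oplus\Delta$ with $\mathfrak{z}=[V_{-1},V_1]\cap\Ann\C_D^\Gamma$, Lemma~\ref{lem1} for the dimension counts, and a comparison of local Lie algebras extended to a global isomorphism via transitivity in degrees $|m|\geq 2$ (inherited from the standard-pentad construction on one side and from Kac's minimality on the other). The paper streamlines the last step by mapping $\hat G(C)$ directly onto $V_{-1}\oplus V_0'\oplus V_1$ and computing the kernel as the full center of $G(C)$ in one shot, rather than first producing $G(C)\twoheadrightarrow\tilde L$ with kernel $K_0$ and then modding out $\mathfrak{z}$; this also makes your $V_0$-invariant hull detour unnecessary, since transitivity of $\tilde L$ for $|m|\geq 2$ is inherited verbatim from $L$.
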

\begin{proof}
Let 
$$
L(r,n;A,D,\Gamma )=\bigoplus _{m\in \Z}V_m
$$
be the canonical $\Z$-grading of $L(r,n;A,D,\Gamma )$,
$$
V_{-1}\simeq \C _{-D}^{\Gamma }=\Span (f_1,\ldots ,f_n),\quad V_0\simeq \h^r=\Span (\epsilon _1,\ldots ,\epsilon _r), \quad V_1\simeq \C_D^{\Gamma }=\Span (e_1,\ldots ,e_n),
$$ and denote its bracket product by $[\cdot,\cdot ]$.
Take a complementary subspace $\Delta$ to $[V_{-1},V_1]$ in $V_0$:
$$
V_0=\mathfrak{h}^r=\mathfrak{gl}_1^r
=
[V_{-1},V_1]
\oplus \Delta .
$$
Moreover, put 
$$
\mathfrak{z}=[V_{-1},V_1]\cap \Ann \C_D^{\Gamma }
$$
and take a complementary subspace $V^{\prime }_0 $ to $\mathfrak{z}$ in $[V_{-1},V_1]$:
$$
[V_{-1},V_1]=V^{\prime }_0\oplus \mathfrak{z}=V^{\prime }_0\oplus ([V_{-1},V_1]\cap \Ann \C_D^{\Gamma }).
$$
Summarizing,
\begin{align}
V_0=[V_{-1},V_1]\oplus \Delta =V_0^{\prime }\oplus \mathfrak{z}\oplus \Delta. \label{eq;dec}
\end{align}
Then, from Lemma \ref {lem1}, we have equations:
\begin{align}
\dim V_0^{\prime }=\rank C,\quad \dim \mathfrak{z}=\rank D-\rank C, \quad \dim \Delta =r-\rank D. \label {eq3}
\end{align}
Let us denote the canonical surjection from $V_0$ to $V^{\prime }_0$ with respect to the decomposition (\ref {eq;dec}) by $p$.
Under these, to prove our claim, it is sufficient to show that a Lie algebra 
$$
L^{\prime \prime }(r,n;A,D, \Gamma )=V_0^{\prime }\oplus \bigoplus _{m\in \Z\setminus \{0\}}V_m
$$
with bracket product $[\cdot ,\cdot]^{\prime \prime}$ defined by
\begin{align*}
[x_k,y_l]^{\prime \prime }=
\begin{cases}
[x_k,y_l]&(k+l\neq 0) \\p([x_k,y_l]) &(k+l=0)
\end{cases},
\text {
where $x_k\in V_k$, $y_l\in V_l$ ($k,l\neq 0$), $x_0,y_0\in V_0^{\prime }$,
}
\end{align*}
is isomorphic to a reduced contragredient Lie algebra $G^{\prime }(C)$ with Cartan matrix $C$.
We can easily check that the bilinear map $[\cdot ,\cdot ]^{\prime \prime }$ satisfies the axioms of Lie algebras.
\par
Take elements $h_i=[e_i,f_i]\in V_0$ (see \cite [Definition 2.12]{Sa4}) and put $h^{\prime }_i=p(h_i)\in V^{\prime }_0$ for $i=1,\ldots ,n$.
Then the Lie algebra $L^{\prime \prime }(r,n;A,D, \Gamma )$ is generated by $\{f_i,h_i^{\prime },e_i\mid i=1,\ldots ,n\}$ with relations
\begin{align*}
&[h^{\prime }_i,e_j]^{\prime \prime }=C_{ij}e_j,&&[h^{\prime }_i,f_j]^{\prime \prime }=-C_{ij}f_j,&&[e_i,f_j]^{\prime \prime }=\delta _{ij}h^{\prime }_i&
\end{align*}
for all $i,j=1,\ldots ,n$ (see \cite [Proposition 2.13]{Sa4}).
On the other hand, we take $\{F_i,H_i,E_i\mid i=1,\ldots ,n\}$ a basis of $\hat {G}(C)=G_{-1}\oplus G_0\oplus G_1$, which is the local part of a contragredient Lie algebra $G(C)=G(C(A,D,\Gamma ))$, satisfying the equations (\ref {eq;contra}).
Define a linear map $\phi :\hat {G}(C)\rightarrow V_{-1}\oplus V_0^{\prime }\oplus V_1$ by
\begin{align*}
&\phi (H_i)=h^{\prime }_i,&&\phi (E_i)=e_i,&&\phi (F_i)=f_i&
\end{align*}
for $i=1,\ldots ,n$.
This linear map $\phi $ is a surjective homomorphism between the local parts of $G(C)$ and of $L^{\prime \prime }(r,n;A,D,\Gamma )$.
We can compute the kernel of $\phi $ using Lemma \ref{lem1} (iii) as follows: 
\begin{align*}
&\Ker \phi = \left \{c_1H_1+\cdots +c_nH_n\in \hat {G}(C)\mid \phi (c_1H_1+\cdots +c_nH_n)=0,\quad c_1,\ldots ,c_n\in \C  \right \} \\
&\quad = \left \{c_1H_1+\cdots +c_nH_n \in \hat {G}(C)\mid c_1h^{\prime }_1+\cdots +c_nh^{\prime }_n=0\in L^{\prime \prime }(r,n;A,D,\Gamma ), \quad c_1,\ldots ,c_n\in \C  \right \} \\
&\quad = \left \{c_1H_1+\cdots +c_nH_n \in \hat {G}(C)\mid c_1h^{\prime }_1+\cdots +c_nh^{\prime }_n\in \mathfrak{z}=[V_{-1},V_1]\cap \Ann \C_D^{\Gamma }, \quad c_1,\ldots ,c_n\in \C  \right \} \\
&\quad = \left \{c_1H_1+\cdots +c_nH_n \in \hat {G}(C)\mid c_1h_1+\cdots +c_nh_n\in \mathfrak{z}=[V_{-1},V_1]\cap \Ann \C_D^{\Gamma }, \quad c_1,\ldots ,c_n\in \C  \right \} \\
&\quad = \left \{c_1H_1+\cdots +c_nH_n \in \hat {G}(C)\mid \begin{pmatrix} c_1 &\cdots &c_n\end{pmatrix}\cdot \Gamma \cdot {}^t D\cdot A\in \mathfrak{z}, \quad c_1,\ldots ,c_n\in \C  \right \} \\
&\quad = \left \{c_1H_1+\cdots +c_nH_n \in \hat {G}(C)\mid \begin{pmatrix} c_1 &\cdots &c_n\end{pmatrix}\cdot C=\begin{pmatrix} 0&\cdots &0\end{pmatrix}, \quad c_1,\ldots ,c_n\in \C   \right \} \\
&\quad =(\text{the center of $G(C)$}).
\end{align*}
Thus, we have an isomorphism of local Lie algebras:
\begin{align*}
&\text {(the local part of $L^{\prime \prime }(r,n;A,D,\Gamma )$)}\simeq V_{-1}\oplus V_0^{\prime }\oplus V_1 \\
&\simeq \hat {G}(C)/\text {(the center of $G(C)$)} \simeq \text {(the local part of $G^{\prime }(C)$)}.
\end{align*}
Here, both graded Lie algebras $L^{\prime \prime }(r,n;A,D,\Gamma )=V_0^{\prime }\oplus \bigoplus _{m\in \Z\setminus \{0\}}V_m$ and $G^{\prime }(C)=\bigoplus _{m\in \Z}G^{\prime }_m$ are transitive except their local parts, i.e. they satisfy the condition in Definition \ref {defn;transitive} for $i\neq 0,\pm 1$.
Indeed, the transitivity for $|m|\geq 2$ of $L^{\prime \prime }(r,n;A,D,\Gamma )$ comes from the construction of the Lie algebra associated with a standard pentad (see \cite [Definitions 2.9, 2.12]{Sa3}), and, one of $G^{\prime }(C)$ comes from the construction of minimal Lie algebras (see \cite[pp.1276--1278, Proposition 4]{ka-1}).
We can extend the isomorphism between the local parts of $L^{\prime \prime }(r,n;A,D,\Gamma )$ and of $G^{\prime }(C)$ to the isomorphism between the whole graded Lie algebras:
\begin{align*}
L^{\prime \prime }(r,n;A,D,\Gamma )\simeq G^{\prime }(C)
\end{align*}
by a similar way to the proof of \cite [Theorem 1.5]{Sa4}.
Thus, we have our claim.
\end{proof}

\begin{col}
We retain to use the notations in Theorem \ref {theo;1}.
When a pentad of Cartan type $P(r,n;A,D,\Gamma )$ satisfies 
$$
r=\rank D=\rank C(A,D,\Gamma ),
$$
the corresponding Lie algebra is isomorphic to a reduced contragredient Lie algebra:
$$
L(r,n;A,D,\Gamma )\simeq G^{\prime }(C(A,D,\Gamma )).
$$
\end{col}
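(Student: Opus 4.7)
The plan is to derive the corollary as an immediate specialization of Theorem \ref{theo;1}. Applying that theorem, we obtain a decomposition
$$L(r,n;A,D,\Gamma )=\bigoplus_{m\in\Z}U_m,\qquad U_0=U_0^{\prime}\oplus \mathfrak{z}\oplus \Delta,$$
together with the dimension formulas $\dim \mathfrak{z}=\rank D-\rank C$ and $\dim \Delta =r-\rank D$, and the identification $U_0^{\prime}\oplus \bigoplus_{m\neq 0}U_m\simeq G^{\prime}(C)$.

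The key observation is that under the hypothesis $r=\rank D=\rank C(A,D,\Gamma )$, the two ``extra'' summands in $U_0$ both collapse: indeed $\dim \mathfrak{z}=\rank D-\rank C=0$ forces $\mathfrak{z}=\{0\}$, and $\dim \Delta =r-\rank D=0$ forces $\Delta =\{0\}$. Hence $U_0=U_0^{\prime}$, so
$$L(r,n;A,D,\Gamma )=U_0^{\prime}\oplus \bigoplus_{m\neq 0}U_m\simeq G^{\prime}(C(A,D,\Gamma )),$$
which is exactly the claim.

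There is essentially no obstacle here, since the work has already been carried out in Theorem \ref{theo;1}; the corollary is a direct reading of that theorem in the degenerate case where the central piece $\mathfrak{z}$ and the diagonalizable complement $\Delta$ are forced to vanish. The only thing to double-check is that the isomorphism provided by Theorem \ref{theo;1} is genuinely an isomorphism of the full Lie algebra $L(r,n;A,D,\Gamma )$ onto $G^{\prime}(C)$ once $\mathfrak{z}$ and $\Delta$ are trivial, which is immediate from the formula $U_0^{\prime}\oplus \bigoplus_{m\neq 0}U_m\simeq G^{\prime}(C)$ in (\ref{eqs;1}).
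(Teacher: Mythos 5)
Your proposal is correct and is exactly the argument the paper intends: the corollary is stated as an immediate specialization of Theorem \ref{theo;1}, with the hypothesis $r=\rank D=\rank C$ forcing $\dim\mathfrak{z}=\dim\Delta=0$ so that $U_0=U_0^{\prime}$ and the whole algebra coincides with $U_0^{\prime}\oplus\bigoplus_{m\neq 0}U_m\simeq G^{\prime}(C)$. Your closing remark that the bracket on this subspace is then the unmodified bracket of $L(r,n;A,D,\Gamma)$ (since the projection $p$ becomes the identity) is the right point to check, and it holds.
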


\begin{remark}
Under the notations in Theorem \ref {theo;1}, the vector space $\mathfrak{z}$ is contained in the center of $L(r,n;A,D,\Gamma )$.
However, in general, $\mathfrak{z}$ and the center of $L(r,n;A,D,\Gamma )$ do not coincide.
\end{remark}

We retain to use the notations in Theorem \ref {theo;1}.
When a pentad $P(r,n;A,D,\Gamma )$ has the invertible Cartan matrix, it is already shown that we have an isomorphism of Lie algebras:
$$
L(r,n;A,D,\Gamma )\simeq \gl _1^{r-n}\oplus G(C(A,D,\Gamma )) \quad \text {(see \cite [Theorem 3.9]{Sa4})}.
$$
This result is a special case of Theorem \ref {theo;1}.
In fact, from the definition of Cartan matrices of a pentad of Cartan type, we can easily show that the data satisfy conditions that
$$
r\geq n\quad \text {and} \quad \rank D=\rank C=n
$$
when $C=C(A,D,\Gamma )$ is invertible.
Under this situation, we have that 
$$
G^{\prime }(C)\simeq G(C), \quad \dim \mathfrak {z}=0, \quad \dim \Ann \C _D^{\Gamma }=r-n
$$
from Lemma \ref {lem1} and the equations (\ref {eq3}).
Since we have $\dim [V_{-1},V_1]+\dim \Ann \C _D^{\Gamma }=r=\dim V_0$ and $[V_{-1},V_1]\cap \Ann \C _D^{\Gamma }=\{0\}$, we can take $\Delta =\Ann \C_D^{\Gamma }$.
Thus, we have an isomorphism of Lie algebras:
$$
L(r,n;A,D,\Gamma )\simeq G^{\prime }(C)\oplus \mathfrak{z}\oplus \Delta \simeq G(C)\oplus \{0\}\oplus \Ann \C _D^{\Gamma } \simeq \gl _1^{r-n}\oplus G(C).
$$
\begin{lemma}\label {lem2}
Let $P(r,n;A,D,\Gamma )$ be a pentad of Cartan type.
Assume that the $i$-th column vector of $D$ is a zero-vector.
Then corresponding elements $e_i\in \C_D^{\Gamma }$ and $f_{i}\in \C_{-D}^{\Gamma }$ belong to the center of $L(r,n;A,D,\Gamma )$.
\end{lemma}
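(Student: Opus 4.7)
The plan is to show that both $e_i$ and $f_i$ commute with every graded piece $V_m$ of the canonical grading $L(r,n;A,D,\Gamma)=\bigoplus_{m\in\Z}V_m$. I would first compute the brackets of $e_i$ and $f_i$ with the local part $V_{-1}\oplus V_0\oplus V_1$ using the hypothesis directly, and then propagate to all degrees by the Jacobi identity together with the transitivity property (for degrees of absolute value at least $2$) enjoyed by the graded Lie algebra associated with a standard pentad.

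For the local part: since the $i$-th column of $D$ is zero, $d_{si}=0$ for all $s=1,\ldots,r$, and the defining formulas $\Box_D^r(\epsilon_s\otimes e_i)=d_{si}e_i$ and $\Box_{-D}^r(\epsilon_s\otimes f_i)=-d_{si}f_i$ give $[V_0,e_i]=0$ and $[V_0,f_i]=0$. The element $h_i$, identified with the $i$-th row of $\Gamma\cdot {}^tD\cdot A$, also vanishes because the $i$-th row of ${}^tD$ is zero. Since $[e_j,f_k]=\delta_{jk}h_j$, this yields $[e_i,V_{-1}]=0$ and, symmetrically, $[f_i,V_1]=0$.

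Next I would upgrade this to $[e_i,V_1]=0$. For $x\in V_1$ set $c=[e_i,x]\in V_2$; the Jacobi identity gives, for any $y\in V_{-1}$,
\[
[y,c]=[[y,e_i],x]+[e_i,[y,x]],
\]
and both terms vanish by the previous step (the first because $[e_i,V_{-1}]=0$, the second because $[y,x]\in V_0$ and $[e_i,V_0]=0$). Transitivity in degree $2$ -- the same property invoked in the proof of Theorem \ref{theo;1} -- then forces $c=0$. A straightforward induction on $m\geq 2$ using $V_m=[V_1,V_{m-1}]$ and Jacobi gives $[e_i,V_m]=0$ for all $m\geq 1$, and the analogous induction on $m\leq -2$ using $V_m=[V_{-1},V_{m+1}]$ together with the already established $[e_i,V_{-1}]=0$ finishes the proof that $e_i$ is central. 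The argument for $f_i$ is the mirror image, exchanging the roles of $V_1$ and $V_{-1}$ and invoking transitivity for degrees $\leq -2$.

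The only delicate ingredient is the transitivity of the associated graded Lie algebra outside its local part; once this is granted, the chain of Jacobi-plus-induction computations is entirely routine.
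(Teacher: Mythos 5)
Your proposal is correct and follows essentially the same route as the paper: kill the brackets of $e_i$ and $f_i$ with the local part $V_{-1}\oplus V_0\oplus V_1$ (the paper gets $[e_i,V_{-1}]=0$ directly from the non-degeneracy of $B_A$ and the $\Phi$-map, while you use the equivalent precomputed relations $[e_j,f_k]=\delta_{jk}h_j$ with $h_i=0$), then use Jacobi plus transitivity in degree $\pm 2$ to handle $[e_i,V_1]$ and $[f_i,V_{-1}]$, and finally propagate to all degrees via the fact that $L(r,n;A,D,\Gamma)$ is generated by its local part. Your explicit induction on $|m|$ is just a spelled-out version of the paper's one-line appeal to that generation property.
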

\begin{proof}
Denote $L(r,n;A,D,\Gamma )=\bigoplus _{n\in \Z}V_n$ and identify $V_{-1},V_0,V_1$ with $\C_{-D}^{\Gamma }, \h^r, \C_D^{\Gamma }$ respectively.
Let us show that $e_i$ belongs to the center of $L(r,n;A,D,\Gamma )$.
From the assumption of our claim, it is clear that $[V_0,e_i]=\{0\}$.
Take arbitrary elements $a\in V_0$ and $f\in V_{-1}$.
Then we have an equation
$$
B_A(a,[e_i,f])=B_A(a,\Phi _{\Box_D^r}(e_i\otimes f))=\langle \Box _D^r(a\otimes e_i),f\rangle ^{\Gamma }_D=\langle 0,f\rangle ^{\Gamma }_D=0.
$$
Since $B_A$ is non-degenerate on $V_0\simeq \h^r$, we have that $[V_{-1},e_i]=\{0\}$.
Moreover, it holds that $[V_1,e_i]=\{0\}$.
In fact, for any $f\in V_{-1}$, we have 
$$
[[V_1,e_i],f]\subset [[V_1,f],e_i]+[V_1,[e_i,f]]\subset [V_0,e_i]+[V_1,0]=\{0\}.
$$
Since $[V_1,V_1]\subset V_2\subset \Hom (V_{-1},V_1)$ (see \cite [Definitions 2.9, 2.12]{Sa3}), we have that $[V_1,e_i]=\{0\}$.
By a similar argument, we have the same results on $f_i$:
$$
[V_{-1}\oplus V_0\oplus V_1, e_i]=[V_{-1}\oplus V_0\oplus V_1, f_i]=\{0\}.
$$
Since $L(r,n;A,D,\Gamma )$ is generated by $V_{-1}\oplus V_0\oplus V_1$, we have our result.
\end{proof}
From Lemma \ref {lem2}, we have the following claim immediately.
\begin{lemma}\label {lem3}
Let $P(r,n;A,D,\Gamma )$ be a pentad of Cartan type and assume that $D$ and $\Gamma $ are of the forms 
$$
D=
\left (
\begin{array}{c|c}
D^{\prime }&O
\end{array}
\right ),
\ 
\Gamma =
\left (
\begin{array}{c|c}
\Gamma ^{\prime }&O\\ \hline
O&\Gamma ^{\prime \prime }
\end{array}
\right )
$$
for some $D^{\prime }\in \mathrm {M}(r,n^{\prime };\C),
\  
\Gamma ^{\prime }\in \mathrm {M}(n^{\prime },n^{\prime };\C),
\  
\Gamma ^{\prime \prime }\in \mathrm {M}(n-n^{\prime },n-n^{\prime };\C)$.
Then we have an isomorphism of Lie algebras:
$$
L(r,n;A,D,\Gamma )\simeq \gl _1^{2(n-n^{\prime })}\oplus L(r,n^{\prime };A,D^{\prime },\Gamma ^{\prime } )
$$
up to grading.
\end{lemma}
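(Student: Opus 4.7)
The plan is to use Lemma \ref{lem2} to extract a $2(n-n')$-dimensional abelian central ideal corresponding to the zero columns of $D$, and then to identify the complement with $L(r,n';A,D',\Gamma')$. Writing $L:=L(r,n;A,D,\Gamma)$ and using the standard identifications $V_{-1}=\Span(f_1,\ldots,f_n)$, $V_0=\h^r$, $V_1=\Span(e_1,\ldots,e_n)$, the hypothesis that the columns of $D$ indexed by $n'+1,\ldots,n$ vanish lets me apply Lemma \ref{lem2} to each such index, placing $e_{n'+1},\ldots,e_n$ and $f_{n'+1},\ldots,f_n$ in the center of $L$. Their span
$$
\mathfrak{z}_0:=\Span(e_{n'+1},\ldots,e_n,f_{n'+1},\ldots,f_n)
$$
is therefore a $2(n-n')$-dimensional abelian central ideal of $L$, naturally isomorphic to $\gl_1^{2(n-n')}$.

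Next I would let $L''\subset L$ be the Lie subalgebra generated by $\h^r\cup\{e_1,\ldots,e_{n'}\}\cup\{f_1,\ldots,f_{n'}\}$, and verify that $L=L''\oplus \mathfrak{z}_0$ as Lie algebras. In the local part $V_{-1}\oplus V_0\oplus V_1$ the two spanning sets are complementary, giving $L''\cap \mathfrak{z}_0=\{0\}$ and $L''+\mathfrak{z}_0\supset V_{-1}\oplus V_0\oplus V_1$. For $|m|\geq 2$, every iterated bracket computing an element of $V_m$ that involves a factor from $\mathfrak{z}_0$ vanishes by centrality, so $V_m\subset L''$. The centrality of $\mathfrak{z}_0$ then yields the Lie algebra direct sum $L=L''\oplus \mathfrak{z}_0$.

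Finally I would identify $L''$ with $L(r,n';A,D',\Gamma')$. The subspaces $\h^r$, $\Span(e_1,\ldots,e_{n'})$ and $\Span(f_1,\ldots,f_{n'})$ form a local Lie algebra that is naturally isomorphic to the local part of $L(r,n';A,D',\Gamma')$: the action of $\h^r$ on $e_i,f_i$ for $i\leq n'$ is determined by the first $n'$ columns of $D$, namely the columns of $D'$; and for $i,j\leq n'$ the bracket $[e_i,f_j]\in V_0$ is the $i$-th row of $\Gamma\cdot{}^{\rm t}D\cdot A$, which by the block forms of $D$ and $\Gamma$ coincides with the $i$-th row of $\Gamma'\cdot{}^{\rm t}D'\cdot A$. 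Since both $L''$ and $L(r,n';A,D',\Gamma')$ are transitive outside their local parts (inherited from the standard-pentad construction), this local-Lie-algebra isomorphism extends to a graded isomorphism $L''\simeq L(r,n';A,D',\Gamma')$ by the same argument used at the end of the proof of Theorem \ref{theo;1}, yielding the claimed $L\simeq \gl_1^{2(n-n')}\oplus L(r,n';A,D',\Gamma')$ up to grading. The main obstacle is this last extension step, but it is handled exactly as in the proof of Theorem \ref{theo;1}.
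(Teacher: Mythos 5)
Your proof is correct and takes exactly the route the paper intends: the paper gives no written argument for this lemma, stating only that it follows ``immediately'' from Lemma \ref{lem2}, and your proposal supplies precisely the missing details (centrality of $e_i,f_i$ for $i>n'$, the resulting direct-sum decomposition $L=L''\oplus\mathfrak{z}_0$, and the identification of $L''$ with $L(r,n';A,D',\Gamma')$ via its local part and transitivity, as in the proof of Theorem \ref{theo;1}).
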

From Lemma \ref  {lem3}, to simplify the calculation, we can assume that $D$ does not have zero-column vectors without loss of generality.
\begin{ex}
We retain to use the notations in Example \ref {ex;1}.
Let us find the structure of $L(\sl _2,m\Lambda _1,V(m+1),\Hom (V(m+1),\C ),K_{\sl _2})$ from the pentad (\ref  {eq;sl}).
For this, we need to calculate the Cartan matrix of (\ref {eq;sl}):
$$
C\left (
\begin{pmatrix}1/8\end{pmatrix}, \begin{pmatrix}2&-m\end{pmatrix}, \begin{pmatrix}4&0\\0&4\end{pmatrix}
\right )
= \begin{pmatrix}4&0\\0&4\end{pmatrix}\cdot \begin{pmatrix}2\\-m\end{pmatrix}\cdot \begin{pmatrix}1/8\end{pmatrix}\cdot \begin{pmatrix}2&-m\end{pmatrix}=\begin{pmatrix}2&-m\\-m&m^2/2\end{pmatrix}.
$$
Under the notations of Theorem \ref {theo;1}, we have equations
\begin{align*}
&\dim \mathfrak{z}=\rank \begin{pmatrix}2&-m\\-m&m^2/2\end{pmatrix}-\rank \begin{pmatrix}2&-m\end{pmatrix}=1-1=0,\\
&\dim \Delta =1-\rank \begin{pmatrix}2&-m\end{pmatrix}=1-1=0
\end{align*}
for any integer $m\geq 0$.
Thus, we have an isomorphism of Lie algebras:
$$
L(\sl _2,m\Lambda _1,V(m+1),\Hom (V(m+1),\C ),K_{\sl _2})\simeq G^{\prime }\left (\begin{pmatrix}2&-m\\-m&m^2/2\end{pmatrix}\right ).
$$
That is, the representation $(m\Lambda _1,V(m+1))$ of $\sl _2$ can be embedded into the reduced contragredient Lie algebra
$$
G^{\prime }\left (\begin{pmatrix}2&-m\\-m&m^2/2\end{pmatrix}\right ).
$$
\end{ex}

\medskip
\begin{flushleft}
Nagatoshi Sasano\\
Institute of Mathematics-for-Industry\\
Kyushu University\\
744, Motooka, Nishi-ku, Fukuoka 819-0395\\
Japan\\
E-mail: n-sasano@math.kyushu-u.ac.jp
\end{flushleft}

\end{document}